\documentclass[a4paper,reqno,11pt]{amsart}
\usepackage[margin=1.22in]{geometry} 
\linespread{1.1}
\usepackage[dvipsnames]{xcolor}

\usepackage{graphicx}
\usepackage[utf8]{inputenc}
\usepackage{xcolor}
\usepackage{latexsym}
\usepackage{lipsum}
\usepackage{nicefrac}
\usepackage[english]{babel}

\usepackage{amsmath}
\usepackage{amssymb}
\usepackage{fge}
\usepackage{mathtools}
\usepackage{amsthm}
\usepackage{verbatim}
\usepackage{mathrsfs}
\usepackage[cal=boondox,scr=boondoxo]{mathalfa}
\usepackage{braket}
\usepackage{tikz-cd}
\usetikzlibrary{ positioning, shapes }
\usepackage[bookmarksopen=true]{hyperref}
\definecolor{darkgreen}{RGB}{0,80,0}
\hypersetup{
    colorlinks = true,
    linkbordercolor = {white},
    linkcolor = {NavyBlue},
    anchorcolor = {black},
    citecolor = {NavyBlue},
    filecolor = {cyan},
    menucolor = {NavyBlue},
    runcolor = {cyan},
    urlcolor = {NavyBlue}
}

\usepackage[noabbrev]{cleveref}
\usepackage[font={small}]{caption}
\usepackage{verbatim}
\usepackage{mathrsfs}
\usepackage[cal=boondox,scr=boondoxo]{mathalfa}
\usepackage[shortlabels]{enumitem}

\usepackage[utf8]{inputenc}

\usepackage{nicematrix}
\usetikzlibrary{patterns}

\newtheoremstyle{teoremas}
{12pt}
{13pt}
{\itshape}
{}
{\bfseries}
{}
{.5em}
{}

\theoremstyle{teoremas}
\newtheorem{theorem}{Theorem}[section]

\newtheorem{lemma}[theorem]{Lemma}

\newtheoremstyle{definition}
{12pt}
{12pt}
{}
{}
{\bfseries}
{}
{.5em}
{}

\theoremstyle{definition}

\newtheorem{example}[theorem]{Example}
\newtheorem{remark}[theorem]{Remark}
\newtheorem{claim}[theorem]{Claim}

\numberwithin{equation}{section}
\definecolor{details}{RGB}{0,0,255}
\definecolor{task}{RGB}{0,191,0}
\definecolor{sketch}{RGB}{255,0,0}




\renewcommand{\min}{\operatorname{min}}


\title[Preservation of log-concavity on gamma polynomials]{Preservation of log-concavity\\ on gamma polynomials}

\author{Luis Ferroni, Greta Panova, \and Lorenzo Venturello}
\address{(L. Ferroni)
 Institute for Advanced Study, Princeton (NJ), United States\newline\indent
 University of Pisa, Pisa, Italy.
}
\email{ferroni@ias.edu}
\email{luis.ferroni@unipi.it}
\address{(G. Panova)
 University of Southern California, Los Angeles (CA), United States
}
\email{gpanova@usc.edu}
\address{(L. Venturello)
 Universit\`a di Siena, Siena, Italy
}
\email{lorenzo.venturello@unisi.it}

\keywords{Log-concave polynomials, $\gamma$-positivity, binomial identities}

\subjclass[2020]{05A15, 05A19, 05A20}

\begin{document}

\begin{abstract}
    Every symmetric polynomial $h(x)$ with center of symmetry $n/2$ can be expressed as a linear combination in the basis $x^i(1+x)^{n-2i}$. The $\gamma$-polynomial of $h(x)$, which we denote $\gamma_h(x)$, records the coefficients of this linear combination. Two decades ago, Br\"and\'en and Gal independently showed that if $\gamma_h(x)$ has nonpositive real roots only, then so does $h(x)$. More recently, Br\"and\'en, Ferroni, and Jochemko proved using Lorentzian polynomials that if $\gamma_h(x)$ is ultra log-concave, then so is $h(x)$, and they raised the question of whether a similar statement can be proved for the usual notion of log-concavity. The purpose of this article is to show that the answer to the question of Br\"and\'en, Ferroni, and Jochemko is affirmative. One of the crucial ingredients of the proof is an inequality involving binomial numbers that we establish via a path-counting argument.
\end{abstract}

\maketitle

\section{Introduction}

A recurring endeavor in combinatorics consists in proving that special polynomials satisfy the properties of being unimodal or log-concave. Precisely, a polynomial $h(x) := h_0 + h_1 \,x + \cdots + h_n \,x^n$ with nonnegative coefficients is said to be \emph{unimodal} if there exists some index $i$ such that 
    \[h_0 \leq \cdots \leq h_{i-1} \leq h_i \geq h_{i+1} \geq \cdots \geq h_n,\]
whereas it is said to be \emph{log-concave} if, for each $0 < i < n$, the inequalities $h_i^2 \geq h_{i-1}\, h_{i+1}$ hold true. If the sequence $h_0,\ldots, h_n$ has no internal zeros, i.e., if $h_ih_j\neq 0$ for $i<j$ implies that $h_k\neq 0$ for $i\leq k\leq j$, then the property of being log-concave implies that of being unimodal. For detailed surveys on unimodality and log-concavity, we refer to \cite{stanley-unimodality,brenti,branden}.

If the polynomial $h(x)$ satisfies the conditions $h_i = h_{n-i}$ for each $i$, then we say that $h$ is \emph{symmetric} with center of symmetry $n/2$. We note that no assumptions on the degree of $h(x)$ have been made, thus in particular one is allowed to have $h_n=0$. Also, notice that the center of symmetry of a symmetric polynomial is unique.

An important tool to prove that such a symmetric polynomial is unimodal consists of proving that it is $\gamma$-positive. Precisely, symmetric polynomials with center of symmetry $n/2$ can be rewritten as follows:
    \[ h(x) = \sum_{i=0}^{\lfloor n/2\rfloor} \gamma_i\, x^i (1+x)^{n-2i},\]
where $\gamma_0,\ldots\gamma_{\lfloor n/2\rfloor}$ are uniquely determined.
Whenever all the coefficients $\gamma_i$ are nonnegative, then $h(x)$ is said to be \emph{$\gamma$-positive}. The \emph{$\gamma$-polynomial} associated to $h(x)$ is defined by $\gamma_h(x) := \gamma_0 + \gamma_1 x+\cdots + \gamma_{\lfloor n/2\rfloor} x^{\lfloor n/2\rfloor}$. It is not difficult to show that $\gamma$-positivity implies unimodality. For a thorough exposition on $\gamma$-positivity, we refer the reader to \cite{athanasiadis-gamma-positivity}.

Two decades ago, Br\"and\'en \cite{branden-gamma} and Gal \cite{gal} showed independently the following result.

\begin{theorem}[\cite{branden-gamma,gal}]\label{thm:real-rooted-preserved}
    Let $h(x)$ be a symmetric polynomial. The polynomial $\gamma_h(x)$ has only nonpositive real zeros if and only if so does $h(x)$.
\end{theorem}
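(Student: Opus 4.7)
The plan is to exploit the substitution $y = \frac{x}{(1+x)^2}$, which relates $h(x)$ and $\gamma_h(y)$ through the identity
\[h(x) = (1+x)^n\, \gamma_h\!\left(\frac{x}{(1+x)^2}\right),\]
obtained immediately from $\gamma_h(y) = \sum_i \gamma_i y^i$ and the defining expansion $h(x) = \sum_i \gamma_i x^i (1+x)^{n-2i}$. Both implications of the theorem will follow by factoring one of the two polynomials and transporting the factorization through this identity.

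For the forward direction, suppose $\gamma_h(y) = \gamma_m \prod_{i=1}^{m}(y + \alpha_i)$ with $\alpha_i \geq 0$ and $m \leq \lfloor n/2 \rfloor$. Substituting and clearing denominators gives
\[h(x) = \gamma_m\, (1+x)^{n-2m} \prod_{i=1}^m \bigl(x + \alpha_i (1+x)^2\bigr).\]
Each factor $x + \alpha_i(1+x)^2$ is either $x$ (when $\alpha_i = 0$) or the quadratic $\alpha_i x^2 + (2\alpha_i+1)x + \alpha_i$, which has discriminant $4\alpha_i + 1 > 0$, positive product of roots, and negative sum, hence two negative real roots. Together with the roots at $-1$ contributed by $(1+x)^{n-2m}$, this exhibits $h(x)$ as a product of linear factors with only nonpositive real roots.

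For the backward direction, I use that if $\beta \neq 0$ is a root of $h$, then so is $1/\beta$, since the symmetry $h_i = h_{n-i}$ is equivalent to $x^n h(1/x) = h(x)$. When all roots of $h$ are nonpositive real, the nonzero ones pair up as $\{-\alpha, -1/\alpha\}$ with $\alpha > 0$, or are self-paired at $-1$. Letting $k$ denote the multiplicity of $0$ as a root of $h$, the factorization reads
\[h(x) = c\, x^k\, (1+x)^{m_{-1}} \prod_{\alpha > 0,\, \alpha \neq 1}\bigl(x^2 + \tau_\alpha x + 1\bigr)^{m_\alpha},\]
where $c = h_{n-k}$ is the leading coefficient and $\tau_\alpha = \alpha + 1/\alpha > 2$. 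The key algebraic identity is $x^2 + \tau x + 1 = (1+x)^2 + (\tau - 2)x$, which gives $(x^2 + \tau x + 1)/(1+x)^2 = 1 + (\tau - 2)y$. Dividing the factorization of $h(x)$ by $(1+x)^n$ and checking that the powers of $(1+x)$ balance (using $\deg h = n - k$), one obtains
\[\gamma_h(y) = c\, y^k \prod_{\alpha}\bigl(1 + (\tau_\alpha - 2)y\bigr)^{m_\alpha},\]
whose roots are $0$ (with multiplicity $k$) and the numbers $-1/(\tau_\alpha - 2) < 0$, all nonpositive real.

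No step in this plan looks genuinely difficult. The only point requiring some care is the bookkeeping in the backward direction: the multiplicity $m_{-1}$ of $-1$ as a root of $h$ does not contribute any root to $\gamma_h$, since the factor $(1+x)^{m_{-1}}$ is entirely absorbed into the denominator $(1+x)^n$. This matches the geometric fact that $x = -1$ is a pole of the map $x \mapsto x/(1+x)^2$, so roots of $h$ located at $-1$ correspond to no finite root of $\gamma_h$.
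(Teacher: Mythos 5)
The paper does not prove this statement; it cites \cite{branden-gamma,gal} and remarks only that the proof uses basic manipulations of univariate polynomials, so there is no in-paper proof to compare against. Your argument is correct and is the standard one: you transport factorizations through the identity $h(x) = (1+x)^n\,\gamma_h\!\left(x/(1+x)^2\right)$, using that $x + \alpha(1+x)^2$ has two negative real roots for $\alpha>0$ in one direction, and pairing reciprocal roots $\{-\alpha,-1/\alpha\}$ of the palindromic $h$ into factors $x^2+\tau x+1 = (1+x)^2 + (\tau-2)x$ in the other, with the power of $(1+x)$ coming from the multiplicity of $-1$ absorbed exactly by $(1+x)^n$. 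This is the same substitution-based approach used by Gal and Br\"and\'en, and your degree bookkeeping ($n = 2k + m_{-1} + 2\sum_\alpha m_\alpha$, so $\deg\gamma_h = k + \sum_\alpha m_\alpha \le \lfloor n/2\rfloor$) checks out.
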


The proof of the above result resorts to basic manipulations of (complex) univariate polynomials. This has been used extensively throughout the literature to approach various problems on unimodality, log-concavity, and real-rootedness.

More recently, Br\"and\'en, Ferroni, and Jochemko \cite[Theorem~3.3]{branden-ferroni-jochemko} proved a version of the above statement, for ultra log-concavity instead. Specifically, the polynomial $h(x) = h_0 + \cdots + h_n\,x^n$ is said to be \emph{ultra log-concave of order $m\geq n$} if the sequence \[\frac{h_0}{\binom{m}{0}}, \frac{h_1}{\binom{m}{1}}, \ldots, \frac{h_n}{\binom{m}{n}}\]
is log-concave. Their result can be reformulated as follows.

\begin{theorem}[\cite{branden-ferroni-jochemko}]\label{thm:ulc-preserved}
    Let $h(x)$ be a symmetric polynomial with center of symmetry $n/2$. If $\gamma_h(x)$ is ultra log-concave of order $\lfloor n/2\rfloor$ with no internal zeros, then $h(x)$ is ultra log-concave of order $n$ with no internal zeros.
\end{theorem}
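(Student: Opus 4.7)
The natural framework for this question is the theory of Lorentzian polynomials. Recall the dictionary: a bivariate homogeneous polynomial $p(x,y)=\sum_i a_i\,x^i y^{D-i}$ with nonnegative coefficients is Lorentzian if and only if $(a_i)$ is ultra log-concave of order $D$ with no internal zeros. Writing $d=\lfloor n/2\rfloor$ and denoting by $\widetilde p$ the bivariate homogenization of a polynomial $p$, \cref{thm:ulc-preserved} becomes the implication that Lorentzianity of $\widetilde\gamma_h(x,y)=\sum_i \gamma_i\,x^i y^{d-i}$ transfers to Lorentzianity of $\widetilde h(x,y)=\sum_i h_i\,x^i y^{n-i}$.

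Homogenizing the defining identity $h(x)=\sum_i \gamma_i\,x^i(1+x)^{n-2i}$ yields the key formula
\[
\widetilde h(x,y)\;=\;\sum_{i=0}^{d}\gamma_i\,(xy)^i(x+y)^{n-2i}.
\]
When $n=2m+1$ is odd, the factor $(x+y)$ can be pulled out of every summand; since multiplication by a linear form with nonnegative coefficients preserves Lorentzianity, the odd case reduces to the even case. In the even case $n=2m$, the identity is precisely the bivariate substitution $\widetilde h(x,y)=\widetilde\gamma_h\!\left(xy,(x+y)^2\right)$ applied to $\widetilde\gamma_h$.

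The central difficulty is transferring Lorentzianity through a substitution that is polynomial rather than linear. The plan is to realize $\widetilde h$ as the linear specialization of a Lorentzian polynomial in more variables. First, polarize $\widetilde\gamma_h(u,v)$ into a multi-affine symmetric polynomial $\Gamma(u_1,\ldots,u_m,v_1,\ldots,v_m)$; this step preserves Lorentzianity by Br\"and\'en--Huh. Next, split each variable into a bilinear product by introducing fresh coordinates: replace $v_j$ by $c_j d_j$ and $u_i$ by $a_i b_i$. The resulting polynomial in $4m$ variables specializes, under the \emph{linear} assignment $a_i\mapsto x$, $b_i\mapsto y$, $c_j,d_j\mapsto x+y$, to $\widetilde h(x,y)$. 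Once the $4m$-variable lift is shown to be Lorentzian, Lorentzianity descends to $\widetilde h$ by closure under linear substitution with nonnegative coefficients.

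The main obstacle, and the step requiring genuine work, is verifying Lorentzianity of the intermediate multi-affine lift: the operation ``replace a variable by the product of two fresh variables'' is not one of the standard closure properties of Lorentzian polynomials. What makes one optimistic is that $\Gamma$ is multi-affine, so each split produces a polynomial that is again multi-affine in the enlarged variable set, and the lifted support is still controlled by the original support of $\widetilde\gamma_h$; the task is then to check M-convexity of this lifted support and the quadratic signature condition for all iterated partials. If a clean closure statement of this kind cannot be established, a fallback is to attack the coefficient inequality $h_k^2\binom{n}{k-1}\binom{n}{k+1}\geq h_{k-1}h_{k+1}\binom{n}{k}^2$ directly, using the expansion $h_k=\sum_i \gamma_i\binom{n-2i}{k-i}$ together with the ULC hypothesis on $(\gamma_i)$, paired carefully with binomial identities to control the resulting quadratic form in the $\gamma_i$'s.
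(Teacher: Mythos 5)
This theorem is not proved in the present paper; it is quoted from Br\"and\'en--Ferroni--Jochemko. That said, your overall framing via Lorentzian polynomials, the homogenization $\widetilde h(x,y)=\sum_i\gamma_i(xy)^i(x+y)^{n-2i}$, and the observation that (for even $n$) this is the nonlinear substitution $\widetilde h=\widetilde\gamma_h\bigl(xy,(x+y)^2\bigr)$, are in the right spirit. However, the step you yourself flag as the obstacle is not merely ``requiring genuine work'': the closure property you would need is false. Replacing a variable by a product of two fresh variables does \emph{not} preserve the Lorentzian property, even for multi-affine inputs. Already the minimal case $m=1$, $\gamma_0=\gamma_1=1$ fails: $\widetilde\gamma_h(u,v)=u+v$ is trivially multi-affine and Lorentzian, yet the split polynomial $ab+cd$ is not, since its Hessian is block-diagonal with two copies of $\bigl(\begin{smallmatrix}0&1\\1&0\end{smallmatrix}\bigr)$ and hence has eigenvalues $1,1,-1,-1$, i.e.\ two positive eigenvalues instead of one. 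The final target $\widetilde h = x^2+3xy+y^2$ \emph{is} Lorentzian, but the intermediate lift you propose to pass through is not, so Lorentzianity cannot be transported along the chain ``polarize $\to$ split $\to$ linearly specialize'': the middle link breaks.

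The route taken in the cited reference, as this paper's own commentary indicates, treats the assignment $z^iw^{d-i}\mapsto(xy)^i(x+y)^{n-2i}$, with $d=\lfloor n/2\rfloor$, as a \emph{linear} operator $T$ on bivariate degree-$d$ forms and invokes the Br\"and\'en--Huh criterion: a nonnegative linear operator preserves Lorentzian polynomials provided its symbol is Lorentzian. One then verifies directly that the symbol of this particular $T$ is Lorentzian --- a single finite check --- and no variable-splitting is needed. Your fallback suggestion, expanding $h_k=\sum_i\gamma_i\binom{n-2i}{k-i}$ and establishing the ULC inequality by hand via binomial estimates, is viable in principle, but that is essentially the elementary strategy that the present paper has to develop for the strictly harder denormalized statement, \Cref{thm:main}, by way of \Cref{lem: negatives on the right} and \Cref{thm: sum nonnegative}; for the ULC statement itself the operator-symbol argument is the shorter path.
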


In contrast to Theorem~\ref{thm:real-rooted-preserved}, in the last theorem the converse is false, see \cite[Example~3.4]{branden-ferroni-jochemko}. The proof of Theorem~\ref{thm:ulc-preserved} relies on properties of the class of Lorentzian polynomials introduced by Br\"and\'en and Huh \cite{branden-huh}. A question raised by Br\"and\'en, Ferroni, and Jochemko in their work is whether it was possible to prove a similar property but for the weaker notion of log-concavity. We show that the answer to their question is affirmative. The main result of this article is the following.

\begin{theorem}\label{thm:main}
    Let $h(x)$ be a symmetric polynomial. If $\gamma_h(x)$ is log-concave and has no internal zeros, then $h(x)$ is log-concave and has no internal zeros.
\end{theorem}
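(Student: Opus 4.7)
The plan is to expand $h_k^2 - h_{k-1}h_{k+1}$ as a bilinear form in the coefficients $\gamma_i$, group the contributions by the antidiagonal $s = i+j$, and use log-concavity of $\gamma$ together with a key binomial partial-sum inequality (the path-counting step) to certify nonnegativity.

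Setting $A_i := \binom{n-2i}{k-i}$ and $A_i^{\pm} := \binom{n-2i}{k-i\pm 1}$, a direct expansion followed by symmetrization of the $i\leftrightarrow j$ exchange yields
\[
h_k^2 - h_{k-1}h_{k+1} \;=\; \sum_i \gamma_i^2 \,\bigl(A_i^2 - A_i^- A_i^+\bigr) \;+\; \sum_{i<j} \gamma_i \gamma_j \, T(i,j),
\]
with the symmetric kernel $T(i,j) := 2 A_i A_j - A_i^- A_j^+ - A_i^+ A_j^-$. The diagonal contribution is nonnegative by the classical log-concavity of the rows of Pascal's triangle. For the remaining off-diagonal sum, I would group pairs $(i,j)$ by their antidiagonal $s = i+j$ and, within each antidiagonal, order them by how far apart $i$ and $j$ are (so the ``center'' pair is the most balanced one, possibly the diagonal pair when $s$ is even). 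Under the hypotheses on $\gamma$, the products $\gamma_i\gamma_j$ on a fixed antidiagonal form a nonnegative sequence that is weakly decreasing as the pair moves away from the center, so Abel's summation by parts reduces the nonnegativity of the antidiagonal subsum to the assertion that every ``inside-out'' partial sum of the corresponding $T(i,j)$-values (with the diagonal entry $A_{s/2}^2 - A_{s/2}^- A_{s/2}^+$ included first when $s$ is even) is nonnegative.

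This family of binomial partial-sum inequalities is the technical heart of the argument and, I expect, the main obstacle. I would tackle it by interpreting $A_i = \binom{n-2i}{k-i}$ as the number of northeast lattice paths from $(i,i)$ to $(k, n-k)$, so each product $A_i A_j$ (and each $A_i^{\pm} A_j^{\mp}$) becomes an enumeration of pairs of such paths with prescribed endpoints. The target partial sum should then admit a reformulation as a signed count of pairs of paths, and either a Lindstr\"om--Gessel--Viennot-type non-intersection argument on such pairs, or an inductive reduction on $n$ driven by the Pascal-type recurrence $\binom{n-2i}{k-i} = \binom{n-1-2i}{k-i-1} + \binom{n-1-2i}{k-i}$, should match it with a manifestly nonnegative combinatorial quantity.

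Finally, the ``no internal zeros'' conclusion for $h(x)$ is automatic once log-concavity is in hand: $h$ has nonnegative coefficients (each basis polynomial $x^i(1+x)^{n-2i}$ is positive and the $\gamma_i$'s are nonnegative), so any interior zero $h_c=0$ together with $h_c^2 \geq h_{c-1}h_{c+1}$ forces one of $h_{c-1}, h_{c+1}$ to vanish as well; iterating, the zero propagates outward until it collides with a positive coefficient on the opposite side, a contradiction.
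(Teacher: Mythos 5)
Your skeleton tracks the paper's: expand $h_k^2 - h_{k-1}h_{k+1}$ as a quadratic form in the $\gamma_i$'s, group by antidiagonal $s=i+j$, note that the products $\gamma_i\gamma_j$ are weakly decreasing as the pair spreads out (which follows from \Cref{lemma:log-concavity-general}), and apply Abel summation. You are also right that Abel summation then requires \emph{every} inside-out partial sum of the $T(i,j)$-values on a fixed antidiagonal to be nonnegative, and you flag this as the main obstacle --- but you do not prove it; the LGV and Pascal-induction sketches are not developed. The paper does not attack that family of partial-sum inequalities head-on. Instead it first establishes a sign-structure lemma (\Cref{lem: negatives on the right}): along a fixed antidiagonal, once a coefficient $c^{(i)}_{\ell-j,\ell+j}$ turns negative, all subsequent ones are nonpositive. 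The proof factors each coefficient as a manifestly nonnegative quantity times a quadratic $Aj^2+B$ in $j$ with $A<0$ and $B\geq 0$, which can change sign at most once. With this structure in hand, the running partial sums of the antidiagonal coefficients form a unimodal sequence starting at a nonnegative value, so nonnegativity of all partial sums collapses to nonnegativity of the single \emph{total} antidiagonal sum. That one sum is what the lattice-path argument of \Cref{thm: sum nonnegative} handles, and the crucial device there is a $180^\circ$ rotation involution between paths joining a pair of points on the two relevant segments --- not an LGV nonintersection argument. Without the sign-structure lemma you are left with a different, and at least not obviously tractable, family of inequalities, so the gap in your proposal is not just that one step is unproved, but that a key reduction that makes the remaining step provable is missing.

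Separately, your ``no internal zeros'' argument does not go through as stated: nonnegativity plus $h_c^2\geq h_{c-1}h_{c+1}$ does \emph{not} exclude internal zeros (the sequence $1,0,0,1$ satisfies both), and the proposed propagation stalls after one step since once two consecutive entries vanish the inequality $0\geq 0\cdot h_{c+2}$ gives no further information. The correct reason is structural: if $\gamma$ has no internal zeros, its support is a contiguous interval $\{m,\ldots,M\}$ with $M\leq\lfloor n/2\rfloor$, and then $h_i=\sum_{j}\binom{n-2j}{i-j}\gamma_j$ is strictly positive exactly for $m\leq i\leq n-m$ and zero otherwise, so the support of $h$ is again contiguous.
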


Once again, and via the same example of Br\"and\'en, Ferroni, and Jochemko \cite[Example~3.4]{branden-ferroni-jochemko}, the converse is not true. 

A reformulation of Theorem~\ref{thm:ulc-preserved} is that the operator on bivariate homogeneous polynomials appearing in \cite[Lemma~3.2]{branden-ferroni-jochemko} preserves the Lorentzian property. Lorentzian polynomials generalize the notion of ultra log-concavity whilst \emph{denormalized} Lorentzian polynomials generalize the notion of log-concavity. In particular, Theorem~\ref{thm:main} says that the aforementioned operator preserves the denormalized Lorentzian property too. As it is the case in the present situation, proving the preservation of the denormalized Lorentzian property is often considerably harder. One reason is that Lorentzian polynomials satisfy much better properties than denormalized Lorentzian---for example, nonnegative linear changes of variables preserve the Lorentzian property but often fail to preserve the denormalized Lorentzian property (see \cite[Theorem~2.10]{branden-huh} and \cite[Section~6]{branden-ferroni-jochemko}). In particular, in order to prove a result like Theorem~\ref{thm:main}, some ad-hoc strategy needs to be applied. In the present article, the proof of Theorem~\ref{thm:main} is essentially elementary, but relies on certain technical combinatorial identities and inequalities involving binomial coefficients and injections on lattice paths (see \Cref{lem: negatives on the right} and \Cref{thm: sum nonnegative}).

\section{The set up}

Throughout the paper a symmetric polynomial $h(x) = h_0 + h_1\, x + \cdots + h_n\, x^n$ with center of symmetry $n/2$ is given.  As mentioned earlier, we do not require $h_n\neq 0$, so that the degree of $h(x)$ is allowed to be less than $n$. Correspondingly, we will denote $\gamma_h(x) = \gamma_0 + \gamma_1\, x + \cdots + \gamma_{\lfloor n/2\rfloor}\, x^{\lfloor n/2\rfloor}$ the $\gamma$-polynomial associated to $h(x)$. 

The identity $h(x) = \sum_{j=0}^{\lfloor n/2\rfloor} \gamma_j\, x^j(1+x)^{n-2j}$ allows us to establish a relation between the $h_i$'s in terms of the $\gamma_j$'s. More precisely, we have:
\begin{equation}\label{eq:h-in-terms-of-gamma}
    h_i = \sum_{j=0}^i \binom{n-2j}{i-j}\gamma_j,
\end{equation}
for each $0\leq i\leq n$. Each quantity $h_i^2-h_{i-1}h_{i+1}$ for $0 < i < n$ can be regarded as a homogeneous polynomial of degree $2$ in the $\gamma_j$'s. However, when written in the basis given by monomials in the $\gamma_j$'s, this homogeneous polynomial may have negative coefficients. For concreteness, we include the following example.

\begin{example}\label{ex:gamma-degree-3}
    Consider $n=6$, so that $h(x) = h_0 + h_1\, x +\cdots + h_6\, x^6$ and, correspondingly, $\gamma_h(x) = \gamma_0 + \gamma_1\, x + \gamma_2\, x^2 + \gamma_3\, x^3$. We have the following identities:
    \begin{align*}
        h_0 = h_6 &= \gamma_0,\\
        h_1 = h_5 &= 6\,\gamma_0 + \gamma_1,\\
        h_2 = h_4 &= 15\,\gamma_0 + 4\,\gamma_1+\gamma_2,\\
        h_3 &= 20\,\gamma_0 + 6\,\gamma_1 + 2\,\gamma_2 + \gamma_3.
    \end{align*}
    From this, one may compute explicitly:
    \begin{align*}
        h_1^2 - h_0h_2 &= 21\,\gamma_0^2 + 8\, \gamma_0\gamma_1 + \gamma_1^2 - \gamma_0\gamma_2,\\
        h_2^2 - h_1h_3 &= 105\, \gamma_0^2 + 64\,\gamma_0\gamma_1 + 10\,\gamma_1^2 + 18\,\gamma_0\gamma_2 + 6\,\gamma_1\gamma_2 + \gamma_2^2 - 6\,\gamma_0\gamma_3 - \gamma_1\gamma_3,\\
        h_3^2 - h_2h_4 &=  175\,\gamma_0^2 + 120\,\gamma_0\gamma_1 + 20\,\gamma_1^2 + 50\,\gamma_0\gamma_2 + 16\,\gamma_1\gamma_2 + 3\,\gamma_2^2 + 40\,\gamma_0\gamma_3 +\\ &\qquad 12\,\gamma_1\gamma_3 + 4\,\gamma_2\gamma_3 + \gamma_3^2,
    \end{align*}
    and symmetrically, $h_4^2-h_3h_5 = h_2^2-h_1h_3$ and $h_5^2-h_6h_4 = h_1^2-h_0h_2$. Observe the presence of negative signs in the first and the second expression above.
\end{example}

The following is an elementary but important lemma that we will use throughout the proof.

\begin{lemma}\label{lemma:log-concavity-general}
    Let $a_0,\ldots,a_n$ be a sequence of nonnegative numbers without internal zeros. The sequence $a_0,\ldots,a_n$ is log-concave if and only if $a_ia_{j-1}\geq a_{i-1}a_{j}$ for all $1\leq i\leq j\leq n$. 
\end{lemma}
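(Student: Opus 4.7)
The $(\Leftarrow)$ direction is immediate: specializing $j = i+1$ in the hypothesis yields $a_i^2 \geq a_{i-1} a_{i+1}$ for every $1 \leq i \leq n-1$, which is exactly log-concavity. So the content of the lemma is the forward direction, and my plan is to reduce it to the monotonicity of consecutive ratios $a_k/a_{k-1}$. The role of the no-internal-zeros hypothesis will be to guarantee that I never need to divide by a spurious zero.

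For the $(\Rightarrow)$ direction, I fix $1 \leq i \leq j \leq n$ and first handle the degenerate cases. If $i = j$ the inequality $a_i a_{i-1} \geq a_{i-1} a_i$ is trivial. If $i < j$ and either $a_{i-1} = 0$ or $a_j = 0$, then the right-hand side $a_{i-1} a_j$ vanishes and the claim follows from $a_i a_{j-1} \geq 0$. In the remaining nondegenerate case $a_{i-1} > 0$ and $a_j > 0$, the no-internal-zeros hypothesis forces $a_k > 0$ for every $k$ with $i-1 \leq k \leq j$, so I may safely define the ratios $r_k := a_k / a_{k-1}$ for $i \leq k \leq j$. The log-concavity inequality $a_k^2 \geq a_{k-1} a_{k+1}$ rewrites as $r_k \geq r_{k+1}$, so $r_i \geq r_{i+1} \geq \cdots \geq r_j$. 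In particular $r_i \geq r_j$, and clearing denominators yields the desired inequality $a_i a_{j-1} \geq a_{i-1} a_j$.

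An equivalent route, should one prefer to avoid ratios, is induction on $j-i$: the inductive step multiplies the log-concavity inequality $a_{j-1}^2 \geq a_{j-2} a_j$ by $a_i$ and combines it with the inductive hypothesis $a_i a_{j-2} \geq a_{i-1} a_{j-1}$ to obtain $a_i a_{j-1}^2 \geq a_{i-1} a_{j-1} a_j$, at which point cancelling $a_{j-1}$ (again justified by the no-internal-zeros assumption in the nondegenerate case) finishes the step. I do not expect any genuine obstacle in this lemma; it is essentially a bookkeeping statement, and the only point requiring care is the treatment of the endpoints when some $a_k$ vanishes, which is exactly the eventuality that the no-internal-zeros hypothesis is designed to rule out.
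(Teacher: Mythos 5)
Your proof is correct and takes the same route as the paper: reduce log-concavity to the monotonicity of the consecutive ratios $a_k/a_{k-1}$ and clear denominators. You are somewhat more explicit about the degenerate cases where some $a_k$ vanishes, but the core argument is identical.
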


\begin{proof}
    The inequalities $a_i^2\geq a_{i-1}a_{i+1}$ are equivalent to: $\frac{a_i}{a_{i-1}} \geq \frac{a_{i+1}}{a_i}$.
    In other words, the log-concavity of the sequence $a_0,\ldots,a_n$ is equivalent to the sequence $b_1,\ldots, b_{n}$, where $b_i := \frac{a_i}{a_{i-1}}$, being weakly decreasing. That is to say that $b_i \geq b_j$ for $i\leq j$ which, in terms of the original sequence, reads $\frac{a_i}{a_{i-1}}\geq \frac{a_j}{a_{j-1}}$
    for all $i\leq j$. The result follows clearing denominators.
\end{proof}

\begin{example}\label{example:gamma-4}
    Consider $n=8$. The computation of $h_3^2-h_2h_4$ in terms of the $\gamma_j$'s is the following:
    \begin{align*} 
    h_3^2-h_2h_4 &=  1176\,\gamma_0^2 + 700\,\gamma_0\gamma_1 + 105\,\gamma_1^2 + 210\,\gamma_0\gamma_2 + 64\,\gamma_1\gamma_2 + 56\,\gamma_0\gamma_3 + \\
    &\qquad + 10\,\gamma_2^2 + 18\,\gamma_1\gamma_3 + 6\,\gamma_2\gamma_3 + \gamma_3^2 - 28\,\gamma_0\gamma_4 - 6\,\gamma_1\gamma_4 - \gamma_2\gamma_4\\&=1176\,\gamma_0^2 + 700\,\gamma_0\gamma_1 + \left[105\,(\gamma_1^2-\gamma_0\gamma_2) + 315\,\gamma_0\gamma_2\right]\\
    &\qquad+ \left[64\,(\gamma_1\gamma_2-\gamma_0\gamma_3) + 120\,\gamma_0\gamma_3\right] + 
    \left[10\,(\gamma_2^2-\gamma_1\gamma_3) + 28\,(\gamma_1\gamma_3-\gamma_0\gamma_4)\right]\\
    &\qquad + \left[6\,(\gamma_2\gamma_3 - \gamma_1\gamma_4)\right] +\left[\gamma_3^2 - \gamma_2\gamma_4\right].
    \end{align*}
    By Lemma~\ref{lemma:log-concavity-general}, if the sequence $\gamma_0,\gamma_1,\gamma_2,\gamma_3,\gamma_{4}$ is log-concave without internal zeros, then all the terms in the last expansion are nonnegative.  
\end{example}

The last example helps us outline the idea of the main proof. Notice that inside each of the square brackets all the terms $\gamma_j\gamma_k$ appearing (with either positive or negative sign) have fixed sum $j+k$. It is tempting to conjecture that $h_i^2-h_{i-1}h_{i+1}$ will in fact always be a nonnegative combination of terms of the form $\gamma_j\gamma_k - \gamma_{j'}\gamma_{k'}$ where $j+k=j'+k'$ and $|j-k| < |j'-k'|$. This is what we will prove in the next sections. 

\section{The first step of the proof}

In general, it is possible to describe the coefficient of each monomial $\gamma_j\gamma_k$ in $h_i^2-h_{i-1}h_{i+1}$ explicitly. From now on, we define
    \[ c_{jk}^{(i)} := [\gamma_j\gamma_k] \left(h_i^2-h_{i-1}h_{i+1}\right)\]
where, as customary in combinatorics, the square brackets mean that we extract the coefficient of $\gamma_j\gamma_k$ in the expression $h_i^2-h_{i-1}h_{i+1}$ regarded as a polynomial in the $\gamma_j$'s. In other words:

\[
h_i^2-h_{i-1}h_{i+1} = \sum_{j\leq k} c^{(i)}_{jk} \gamma_{j}\gamma_{k}.
\]	

It is immediate from \eqref{eq:h-in-terms-of-gamma}  that $c^{(i)}_{jk}=0$ whenever $k > i+1$. In other words, the nonvanishing monomials in $h_i^2-h_{i-1}h_{i+1}$ involve the variables $\gamma_0,\ldots,\gamma_{i+1}$ only.

\begin{lemma}\label{lemma:coeffs}
    The coefficients $c_{jk}^{(i)}$ can be computed explicitly as follows:
    \[ c_{jk}^{(i)} = \begin{cases} \binom{n-2j}{i-j}^2-\binom{n-2j}{i-j-1}\binom{n-2j}{i-j+1} & \text{ if $j=k$},\\ 2\binom{n-2j}{i-j}\binom{n-2k}{i-k} - \binom{n-2j}{i-j-1}\binom{n-2k}{i-k+1}-\binom{n-2j}{i-j+1}\binom{n-2k}{i-k-1}& \text{ if $j< k$}.
    \end{cases}\]
\end{lemma}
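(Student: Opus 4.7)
The plan is to obtain the formula by direct expansion of $h_i^2$ and $h_{i-1}h_{i+1}$ using equation~\eqref{eq:h-in-terms-of-gamma}, being careful only about the combinatorial bookkeeping of the symmetric monomials $\gamma_j\gamma_k$ with $j\neq k$.

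First, I would start from the identity $h_i=\sum_{j=0}^i\binom{n-2j}{i-j}\gamma_j$ and square it, splitting the resulting double sum into diagonal and off-diagonal terms:
\[
h_i^2 \;=\; \sum_{j=0}^i \binom{n-2j}{i-j}^2 \gamma_j^2 \;+\; 2\sum_{0\le j<k\le i}\binom{n-2j}{i-j}\binom{n-2k}{i-k}\gamma_j\gamma_k.
\]
This gives the positive contribution to $c_{jk}^{(i)}$ in both cases of the lemma, where the factor $2$ in the off-diagonal case comes from the fact that $\gamma_j\gamma_k=\gamma_k\gamma_j$.

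Next, I would expand
\[
h_{i-1}h_{i+1} \;=\; \Bigl(\sum_{j}\binom{n-2j}{i-1-j}\gamma_j\Bigr)\Bigl(\sum_{k}\binom{n-2k}{i+1-k}\gamma_k\Bigr),
\]
and again separate diagonal and off-diagonal contributions. The coefficient of $\gamma_j^2$ is $\binom{n-2j}{i-j-1}\binom{n-2j}{i-j+1}$ (obtained in a single way), while the coefficient of $\gamma_j\gamma_k$ for $j<k$ is the sum of the two cross terms $\binom{n-2j}{i-j-1}\binom{n-2k}{i-k+1}$ (taking $\gamma_j$ from $h_{i-1}$ and $\gamma_k$ from $h_{i+1}$) and $\binom{n-2j}{i-j+1}\binom{n-2k}{i-k-1}$ (taking $\gamma_j$ from $h_{i+1}$ and $\gamma_k$ from $h_{i-1}$).

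Subtracting the two expansions then yields precisely the two cases in the statement. The comment about $c_{jk}^{(i)}=0$ for $k>i+1$, made just before the lemma, is automatic from the vanishing of the binomial coefficients $\binom{n-2k}{i-k}$, $\binom{n-2k}{i-k+1}$, etc., whenever the lower entry is negative; so no separate case analysis is needed. There is no real obstacle to this argument: it is a bare expansion, and the only point to watch is the symmetry factor of $2$ in the off-diagonal terms of $h_i^2$, which is absent in $h_{i-1}h_{i+1}$ precisely because there the roles of the two factors are distinguishable.
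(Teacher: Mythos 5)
Your proof is correct and takes essentially the same approach as the paper: extract the coefficient of $\gamma_j\gamma_k$ directly from the expansions of $h_i^2$ and $h_{i-1}h_{i+1}$ via equation~\eqref{eq:h-in-terms-of-gamma}, noting the symmetry factor of $2$ in the square and the two distinguishable cross terms in the product.
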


\begin{proof}
    The statement follows from \eqref{eq:h-in-terms-of-gamma}. Indeed, if $j<k$ we have that 
    \[ [\gamma_j\gamma_k]h_i^2 = 2([\gamma_j]h_i)([\gamma_k]h_i)=2\binom{n-2j}{i-j}\binom{n-2k}{i-k},\]
    and \begin{align*}
    [\gamma_j\gamma_k]h_{i-1}h_{i+1}&=([\gamma_j]h_{i-1})([\gamma_k]h_{i+1})+([\gamma_k]h_{i-1})([\gamma_j]h_{i+1})\\&=\binom{n-2j}{i-j-1}\binom{n-2k}{i-k+1}+\binom{n-2j}{i-j+1}\binom{n-2k}{i-k-1}.    
    \end{align*} 
    If $j=k$ we have a slightly different expression:
     \[ [\gamma_j^2]h_i^2 = ([\gamma_j]h_i)^2=\binom{n-2j}{i-j}^2,\]
    and
    \[
        [\gamma_j^2]h_{i-1}h_{i+1}=([\gamma_j]h_{i-1})([\gamma_j]h_{i+1})=\binom{n-2j}{i-j-1}\binom{n-2j}{i-j+1}.   \qedhere
    \]
\end{proof}


\begin{remark}\label{remark:cjji positive}
    It is easy to see that the sequence $a_{i}=\binom{n-2j}{i-j}$ is log-concave for any $j\leq \lfloor n/2\rfloor$. In particular we have that 
    \[
        c^{(i)}_{jj}=\binom{n-2j}{i-j}^2-\binom{n-2j}{i-j-1}\binom{n-2j}{i-j+1}=a_i^2-a_{i-1}a_{i+1} \geq 0.   \]
    Moreover, using twice the identity $\binom{n-2j}{k}=\binom{n-2(j+1)}{k}+2\binom{n-2(j+1)}{k-1}+\binom{n-2(j+1)}{k-2}$ and letting $a_i=\binom{n-2(j+1)}{i-j}$ we obtain
    \begin{align*}
        c^{(i)}_{j,j+1}&=(a_{i-1}a_i-a_{i-2}a_{i+1})+4(a_{i-1}^2-a_{i-2}a_i)+(a_{i-2}a_{i-1}-a_{i-3}a_{i}) \geq 0,        
    \end{align*}
    where the last inequality follows from the log-concavity of the sequence $a_i$ and \Cref{lemma:log-concavity-general}.
\end{remark}

It seems hard to characterize when a coefficient $c_{jk}^{(i)}$ is negative. However, we prove \Cref{lem: negatives on the right} which describes a structure in the signs of such coefficients, provided that one makes special choices for the parameters. Fix some $\ell\geq 0$. We will show that among the coefficients $c_{jk}^{(i)}$ with $j+k=2\ell$ or $j+k=2\ell-1$, nonpositive ones can occur only at the ``tail". More precisely, writing such coefficients as a sequence $(c_{\ell,\ell}^{(i)},c_{\ell-1,\ell+1}^{(i)},c_{\ell-2,\ell+2}^{(i)},\dots)$ --- or similarly $(c_{\ell-1,\ell}^{(i)},c_{\ell-2,\ell+1}^{(i)},c_{\ell-3,\ell+2}^{(i)},\dots)$ --- we show that if this sequence contains a negative number, then all the ones following it must be nonpositive.

\begin{lemma}\label{lem: negatives on the right}
    Let $n\geq 1$, $0\leq i\leq  n/2$ and $1\leq \ell \leq (i+1)/2$. Then:
    \begin{enumerate}[\normalfont (i)]
        \item If $c_{\ell-j_0,\ell+j_0}^{(i)}<0$, then $c_{\ell-j,\ell+j}^{(i)}\leq 0$ for every $j\geq j_0$.
        \item If $c_{\ell-1-j_0,\ell+j_0}^{(i)}<0$, then $c_{\ell-1-j,\ell+j}^{(i)}\leq 0$ for every $j\geq j_0$.
    \end{enumerate}
\end{lemma}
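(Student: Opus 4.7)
The plan is to prove the following stronger, unified statement: for each fixed value $T=a+b$ (with $a=i-j$, $b=i-k$ both nonnegative), there is an explicit threshold $u^*\geq 0$ such that $\phi(a,b):=c^{(i)}_{jk}\geq 0$ if and only if $(a-b)^2\leq 4u^*$. Both sequences in the lemma have $|a-b|$ strictly increasing with $j$ (equal to $2j$ in case (i) and $2j+1$ in case (ii)), so this threshold yields at once the ``once negative, stays nonpositive'' behavior.

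The first step is to factor out the strictly positive quantity $X(a,b):=\binom{m+2a}{a}\binom{m+2b}{b}$, where $m=n-2i\geq 0$, from the formula in \Cref{lemma:coeffs}. The standard ratios $\binom{m+2a}{a-1}/\binom{m+2a}{a}=\tfrac{a}{m+a+1}$ and $\binom{m+2a}{a+1}/\binom{m+2a}{a}=\tfrac{m+a}{a+1}$ (and the analogous ones for $b$) give
\[
\phi(a,b)=X(a,b)\bigl[\,2-r(a,b)\,\bigr], \qquad r(a,b):=\frac{a(m+b)}{(m+a+1)(b+1)}+\frac{b(m+a)}{(m+b+1)(a+1)},
\]
so the sign of $\phi(a,b)$ is the sign of $2-r(a,b)$.

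The crucial step is the symmetric parametrization $a=Q+\sigma$, $b=Q-\sigma$, $m+a=P+\sigma$, $m+b=P-\sigma$, with $Q:=T/2$, $P:=m+T/2$, and $\sigma:=(a-b)/2$. After placing $r(a,b)$ over the common denominator $[(P+1)^2-\sigma^2][(Q+1)^2-\sigma^2]$, the odd powers of $\sigma$ in the numerator cancel by symmetry, so the numerator depends only on $u:=\sigma^2$. Subtracting twice the denominator from the numerator cancels the $u^2$ term, and after further simplification the inequality $r(a,b)\leq 2$ reduces to
\[
\bigl[\,2(P-Q)^2+(P+Q+1)\,\bigr]\,u\ \leq\ (P+1)(Q+1)(P+Q+1),
\]
a \emph{linear} condition in $u$ with strictly positive slope. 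Hence $\phi(a,b)\geq 0$ if and only if $u\leq u^*:=\tfrac{(P+1)(Q+1)(P+Q+1)}{2(P-Q)^2+(P+Q+1)}$, proving the threshold claim.

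Case~(i) of the lemma is then immediate: the first entry $c^{(i)}_{\ell,\ell}$ is nonnegative by \Cref{remark:cjji positive}, and for $j\geq 1$ one has $(a-b)^2=4j^2$; if $c^{(i)}_{\ell-j_0,\ell+j_0}<0$, then $4j_0^2>4u^*$, so $4j^2\geq 4j_0^2>4u^*$ for every $j\geq j_0$, forcing $c^{(i)}_{\ell-j,\ell+j}\leq 0$. Case~(ii) is identical with $(a-b)^2=(2j+1)^2$. I expect the main obstacle to be the algebraic verification that $r(a,b)\leq 2$ reduces to the displayed linear condition in $u$: this is polynomial manipulation, but the cancellation hinges on the identities $PQ-(P+1)(Q+1)=-(P+Q+1)$ and $P(P+1)+Q(Q+1)-(2P+1)(2Q+1)+(P+1)^2+(Q+1)^2=2(P-Q)^2+(P+Q+1)$, and must be carried out carefully. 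A minor boundary situation arises in case~(i) when $\ell=(i+1)/2$, so that the last term involves $k=i+1$ and $b=-1$ (outside the factorization above); but \Cref{lemma:coeffs} evaluates directly to $c^{(i)}_{0,i+1}=-\binom{n}{i-1}\leq 0$, and the above argument remains intact.
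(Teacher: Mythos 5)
Your proof is correct and follows essentially the same strategy as the paper: factor out the positive product of binomial coefficients, observe that the sign of the residual rational factor is governed by a downward-opening quadratic in the gap variable, and handle the boundary cases where a lower index exceeds $i$ separately. The symmetric parametrization $(P,Q,\sigma)$ is a tidy repackaging of the paper's $Aj^2+B$ analysis (one has $A=-2\bigl[2(P-Q)^2+(P+Q+1)\bigr]$), and clearing denominators to obtain a linear inequality in $u=\sigma^2$ makes the threshold $u^*$ explicit, but the underlying argument is the same.
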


\begin{proof}
    Let us write the proof for the first of the two sequences in detail, as the other is very similar. What we want to prove is equivalent to showing that for fixed $n$, $i$, $\ell$ as in the statement, the sequence $\{c^{(i)}_{\ell-j,\ell+j}\}_{j=1}^{\ell}$ can possibly become nonpositive only for sufficiently large values of $j$.

    By Lemma~\ref{lemma:coeffs}, for $j>0$,
        \begin{align}
            c_{\ell-j,\ell+j}^{(i)} 
        &= 2\cdot \tbinom{n-2\ell+2j}{i-\ell+j}\tbinom{n-2\ell-2j}{i-\ell-j} - 
        \tbinom{n-2\ell+2j}{i-\ell+j-1}\tbinom{n-2\ell-2j}{i-\ell-j+1}- 
        \tbinom{n-2\ell+2j}{i-\ell+j+1}\tbinom{n-2\ell-2j}{i-\ell-j-1}\label{eq:defining}\\
        &= \tbinom{n-2\ell+2j}{i-\ell+j}\tbinom{n-2\ell-2j}{i-\ell-j} \cdot \left(2 - \tfrac{(i-\ell+j)(n-\ell-j-i)}{(n-\ell+j-i+1)(i-\ell-j+1)} - \tfrac{(i-\ell-j)(n-\ell+j-i)}{(i-\ell+j+1)(n-\ell-j-i+1)}\right)\nonumber\\
        &= \tbinom{n-2\ell+2j}{i-\ell+j}\tbinom{n-2\ell-2j}{i-\ell-j} \cdot \tfrac{A j^2 + B}{(n-\ell+j-i+1)(i-\ell-j+1)(i-\ell+j+1)(n-\ell-j-i+1)},\label{eq:expression}
        \end{align}
    where $A$ and $B$ do not depend on $j$ and are given by:
    \begin{align*}
        A &:= -4n^2 + 16ni - 16i^2 - 2n + 4\ell - 2,\\
        B &:= 2n^2i - 2ni^2 - 2n^2\ell  - 4ni\ell + 4i^2\ell  + 6n\ell ^2 - 4\ell ^3 \\
        &\qquad + 2n^2  + 2ni - 2i^2 - 10n\ell  + 10\ell ^2 + 4n - 8\ell  + 2.
    \end{align*}
    We are interested in analyzing the sign of $c^{(i)}_{\ell-j,\ell+j}$ when $j\geq 1$. The crucial observation is that the behavior of this sign in the non-trivial cases is dictated only by $Aj^2+B$. Each of the factors in the denominator of the fraction appearing in \eqref{eq:expression} is nonnegative for the admissible values of $j$. Sometimes it can vanish, but precisely in those cases we already know what the sign of $c^{(i)}_{\ell-j,\ell+j}$ is. More precisely, taking, e.g., the term $i-\ell-j+1$ appearing in the denominator of \eqref{eq:expression}, we may safely assume that it is strictly positive, because if we had $\ell+j > i + 1$, then the number $c^{(i)}_{\ell-j,\ell+j}$ is actually zero, and similarly, if $\ell+j = i+1$, then $c^{(i)}_{\ell-j,\ell+j}<0$. 

    Now, we claim that $A < 0$ and $B > 0$. To prove the inequality of $A$, we can just write:
    \[ A = -4(n-2i)^2 - 2(n-2\ell) - 2\]
    and each summand is indeed negative. To see the sign of $B$, it suffices to replace $j=0$ in the expression for $c_{\ell-j,\ell+j}^{(i)}$ appearing in \eqref{eq:defining}---an important caveat, however, is that plugging $j=0$ will not give us $c^{(i)}_{\ell,\ell}$ because the case in which the two lower indices are equal is handled differently by Lemma~\ref{lemma:coeffs}. Nonetheless, after taking $j=0$ in that equation, the resulting value is:
        \[ 2 \left(\binom{n-2\ell}{i-\ell}^2 - \binom{n-2\ell}{i-\ell-1}\binom{n-2\ell}{i-\ell+1}\right),\]
    which is nonnegative by Remark~\ref{remark:cjji positive}. In particular, $B> 0$, because plugging $j=0$ in \eqref{eq:expression} leaves us with $B$ times a positive expression. 
    
    From all the above discussion, we have that the sign of $c^{(i)}_{\ell-j,\ell+j}$ is given by that $Aj^2+B$, and this expression is a quadratic having negative leading term, and evaluates to something nonnegative when $j=0$. Geometrically, this says that the graph of the function $j\mapsto Aj^2+B$ is a parabola that crosses the $j$-axis in exactly one positive value $j_0$ of $j$. In turn, this implies that $c^{(i)}_{\ell-j,\ell+j}$ will possibly become nonpositive for large $j$ within the valid range, and from a certain point on.
\end{proof}

\begin{example}\label{ex:n=16}
    Let us consider $n=16$, and $i=5$. The following is the resulting expression for $h_5^2-h_4h_6$:
    \begin{align*}
        h_5^2-h_4h_6 &= 4504864\gamma_0^2\\
        &\qquad +2186184 \gamma_0\gamma_1,\\
        &\qquad +273273 \gamma_1^2+492492\gamma_0\gamma_2\\
     &\qquad +128128 \gamma_1\gamma_2+94640 \gamma_0\gamma_3\\
     &\qquad +15730 \gamma_2^2+26390 \gamma_1\gamma_3+10920 \gamma_0\gamma_4\\
     &\qquad +6930 \gamma_2\gamma_3 + 3822 \gamma_1\gamma_4 -2184 \gamma_0\gamma_5\\
     &\qquad +825 \gamma_3^2 + 1177 \gamma_2\gamma_4 -182 \gamma_1\gamma_5 -1820 \gamma_0\gamma_6\\
     &\qquad +320 \gamma_3\gamma_4 + 44 \gamma_2\gamma_5 -364 \gamma_1\gamma_6 + 0 \gamma_0\gamma_7\\
     &\qquad +36\gamma_4^2 + 30 \gamma_3\gamma_5 -66\gamma_2\gamma_6 + 0 \gamma_1\gamma_7+0 \gamma_0\gamma_8.
    \end{align*}
    Let us focus on the terms $\gamma_j\gamma_k$ for which $j+k=6$, i.e., $\gamma_{3-\ell,3+\ell}$ for $\ell=0,\ldots,3$. From the above lemma, we know that the nonpositive terms (if any) must appear together and at the end. This is indeed the case, as our sequence is $(825,1177,-182,-1820)$. 
\end{example}

\section{The second step of the proof}

\begin{lemma}
    Let $a_0,\ldots,a_s$ be a sequence of numbers such that if $a_{j_0}<0$ for some index $j_0$, then $a_j\leq 0$ for all $j\geq j_0$. Let $b_0\geq \cdots \geq b_s\geq 0$ be a weakly decreasing sequence of nonnegative numbers. Then
        \[\sum_{j=0}^s a_j \geq 0 \qquad \Longrightarrow \qquad \sum_{j=0}^s a_jb_j \geq 0.\]
\end{lemma}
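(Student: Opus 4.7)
The plan is to use Abel summation (summation by parts) to reduce everything to nonnegativity of partial sums of the $a_j$'s. Concretely, I would set $S_k := \sum_{j=0}^{k} a_j$ for $0 \leq k \leq s$, with the convention $S_{-1} = 0$, and rewrite
\[
\sum_{j=0}^{s} a_j b_j \;=\; \sum_{j=0}^{s-1} S_j\,(b_j - b_{j+1}) \;+\; S_s\, b_s.
\]
Since the hypothesis that $b_0 \geq b_1 \geq \cdots \geq b_s \geq 0$ guarantees $b_j - b_{j+1} \geq 0$ for all $j < s$ and $b_s \geq 0$, it then suffices to prove that $S_k \geq 0$ for every $0 \leq k \leq s$.

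To establish $S_k \geq 0$, I would distinguish two cases according to whether some $a_j$ is negative. If every $a_j \geq 0$, then each partial sum is trivially nonnegative. Otherwise, let $j_0$ be the smallest index with $a_{j_0} < 0$. For $k < j_0$, every term $a_0,\dots,a_k$ is nonnegative by minimality of $j_0$, so $S_k \geq 0$. For $k \geq j_0$, we use the hypothesis on $(a_j)$: all terms $a_{k+1},\dots,a_s$ lie beyond $j_0$ and are therefore nonpositive, so
\[
S_k \;=\; S_s \;-\; \sum_{j=k+1}^{s} a_j \;\geq\; S_s \;\geq\; 0,
\]
where the last inequality uses the assumption $\sum_{j=0}^{s} a_j \geq 0$. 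This closes the case analysis.

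Combining both pieces, every $S_k \geq 0$, and the Abel rearrangement above exhibits $\sum_{j=0}^{s} a_j b_j$ as a nonnegative combination of nonnegative quantities, finishing the proof. There is no real obstacle here; the only small subtlety is making sure one handles the case in which the sequence $(a_j)$ has no negative entries separately, and being careful with the boundary term $S_s b_s$ in the Abel summation.
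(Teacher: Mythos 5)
Your proof is correct and takes essentially the same route as the paper: both use Abel summation to rewrite $\sum a_j b_j$ in terms of the partial sums $S_k$ (called $A_i$ in the paper), and both deduce $S_k \geq 0$ for all $k$ from the sign pattern of the $a_j$ and the hypothesis $S_s \geq 0$. The only cosmetic difference is that the paper sets $b_{s+1} := 0$ so the Abel identity reads as a single sum $\sum_{i=0}^s A_i(b_i - b_{i+1})$, whereas you keep the boundary term $S_s b_s$ separate, and the paper phrases the nonnegativity of the partial sums via unimodality of $\{A_i\}$ while you make the same observation by a two-case split at $j_0$.
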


\begin{proof}
Let $A_i:=a_0+\cdots+a_i$ for $i=0,\ldots,s$ and set $A_{-1}:=0$, $A_{s+1}:=0$, $b_{s+1}:=0$.  We have that $a_i>0$ if and only if $A_i>A_{i-1}$, so if $j_0$ is the first index for which $a_{j_0}<0$, then $0\leq A_0 \leq A_1 \leq \cdots \leq A_{j_0-1} > A_{j_0} \geq \cdots \geq A_s \geq 0$ and thus $\{A_i\}$ is a unimodal sequence.  

Finally, observe that the following identity holds:
\[
\sum_{i=1}^s a_i b_i = \sum_{i=0}^s (A_i-A_{i-1})b_i = \sum_{i=0}^s A_i(b_i-b_{i+1}). 
\]
Since each term on the right hand side is nonnegative, the inequality follows. 
\end{proof}

The last lemma may be applied in the situation of Lemma~\ref{lem: negatives on the right}. Taking $a_j=c^{(i)}_{\ell-j,\ell+j}$ for each $j$, the condition on the signs is satisfied. Taking $b_j = \gamma_{\ell-j}\gamma_{\ell+j}$ for each $j$ gives a weakly decreasing sequence, thanks to Lemma~\ref{lemma:log-concavity-general}. In particular, we have that

\[ \sum_{j\geq 0} c^{(i)}_{\ell-j,\ell+j} \geq 0 \qquad \Longrightarrow \qquad \sum_{j\geq 0} c^{(i)}_{\ell-j,\ell+j} \gamma_{\ell-j}\gamma_{\ell+j}\geq 0.\]
The condition on the right hand side of the above display is exactly what we need to prove in order to conclude that in $h_i^2-h_{i-1}h_{i+1}$ the contribution of all the products $\gamma_j\gamma_k$ with fixed $j+k=2\ell$ is nonnegative. An analogous reasoning applies to the case of $c^{(i)}_{\ell-1-j,\ell+j}$. In summary, we have reduced the problem to showing that the sum on the left and, similarly, the odd counterpart $\sum_{j\geq 0} c_{\ell-1-j,\ell+j}$, are nonnegative.

Our next goal is to show that, indeed, the two sums
\begin{equation} \label{eq:two-sums}
\sum_{j\geq 0} c_{\ell-j,\ell+j}^{(i)} \qquad \text{ and } \qquad \sum_{j\geq 0} c_{\ell-1-j,\ell+j}^{(i)}\end{equation}
are nonnegative. These nonnegativity properties are equivalent to proving the following neat statement, which we will show in the next subsection.

\begin{theorem}\label{thm:reduced-case}
    Let $n\in\mathbb{N}$ and let $u$ be a formal variable, and let $\gamma_h(x) = \sum_{j=0}^{\lfloor n/2\rfloor} u^jx^j$. Then 
    \[
        [u^r](h_i^2-h_{i-1}h_{i+1})\geq 0,
    \]
    for every $1\leq i\leq \lfloor\frac{n}{2}\rfloor$ and every $r\geq 0$.
\end{theorem}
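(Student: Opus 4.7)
The plan is to interpret $h_i(u)$ combinatorially as the weighted enumeration of ``decorated'' lattice paths and then to invoke the Lindström--Gessel--Viennot (LGV) lemma. Concretely, $h_i(u) = \sum_j \binom{n-2j}{i-j}\, u^j$ counts paths from $(0,0)$ to $(i, n-i)$ that begin with $j$ diagonal $(+1,+1)$-steps (each contributing weight $u$) and continue with an ordinary NE path of length $n-2j$; by translation, the same polynomial enumerates analogous paths from $(1,-1)$ to $(i+1, n-i-1)$. Under this dictionary, $h_i$, $h_{i+1}$, $h_{i-1}$, and $h_i$ appear precisely as the four entries of the $2\times 2$ transfer matrix between the sources $S_1=(0,0)$, $S_2=(1,-1)$ and the sinks $T_1=(i,n-i)$, $T_2=(i+1,n-i-1)$. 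The LGV lemma then yields
\[
h_i^2-h_{i-1}h_{i+1} \;=\; N_{\mathrm{id}}(u)-N_{\mathrm{swap}}(u),
\]
where $N_{\mathrm{id}}(u)$ and $N_{\mathrm{swap}}(u)$ are the weighted generating functions over pairs of vertex-disjoint decorated paths realising, respectively, the identity and the swap matching between sources and sinks.

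With this set-up, proving Theorem~\ref{thm:reduced-case} reduces to exhibiting, for every $r$, an injection from the non-intersecting swap pairs of total diagonal weight $r$ into the non-intersecting identity pairs of the same total weight. A useful observation is that the diagonal prefixes of the two paths lie on the parallel lines $y=x$ and $y=x-2$, so they never share a vertex; non-intersection is therefore a condition that involves only the NE suffixes. My plan is to identify, inside any non-intersecting swap pair, a distinguished ``exchange vertex''---plausibly the first lattice site in the ambient grid at which the NE suffix of one path meets (or would meet, after a suitable rerouting) the other path---and to exchange the portions of the two paths past this vertex, producing an identity pair of the same total weight that remains vertex-disjoint. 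Invertibility would follow from carrying out the analogous procedure on the resulting identity pair.

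The hardest part will be pinning down the exchange vertex and verifying that the resulting pair is both non-intersecting and of the identity type; the usual ``swap at first intersection'' technique does not apply here, because the pairs under consideration are by assumption already non-intersecting. As a fallback, one can derive the closed-form expression
\[
[u^r](h_i^2-h_{i-1}h_{i+1}) \;=\; \sum_{s=r}^{i}\binom{n+2}{s}\binom{n-2r}{s-r}\; -\; \sum_{s=r+2}^{i+1}\binom{n+2}{s}\binom{n-2r}{s-r-2}
\]
from the rational generating function $\tilde h(x) = (1+x)^{n+2}/((1+x)^2-ux)$, and then attempt to prove the inequality directly by interpreting both sums as enumerations of lattice paths in a common rectangle and exhibiting a combinatorial cancellation between them.
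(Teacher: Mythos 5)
Your LGV setup is clever and, when set up carefully, does yield the identity $h_i^2-h_{i-1}h_{i+1}=N_{\mathrm{id}}(u)-N_{\mathrm{swap}}(u)$. But there are two genuine problems. First, a subtlety in the model: the diagonal steps must occur only at the start, so the underlying graph has to be ``layered'' (a diagonal phase followed by an NE phase, with a one-way transition edge). With that layering, a diag-mode vertex of one path and an NE-mode vertex of the other at the same lattice point are distinct graph vertices and do \emph{not} count as an intersection --- which is what is needed for your assertion that non-intersection depends only on the NE suffixes to be true, but you never say this, and the justification you do give (the prefixes lie on $y=x$ and $y=x-2$) addresses only the diag--diag case. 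Without the layered interpretation the LGV determinant would not even equal $h_i^2-h_{i-1}h_{i+1}$. Second, and more importantly, $N_{\mathrm{swap}}$ is genuinely nonzero: e.g.\ for $n=8$, $i=3$, $r=4$ one finds $N_{\mathrm{id}}=N_{\mathrm{swap}}=31$, so the entire content of the theorem is exactly the injection you describe but do not construct. You say yourself that ``the hardest part will be pinning down the exchange vertex''; that missing step is not a detail to be filled in later --- it is the proof.

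The paper's argument is quite different and does not pass through LGV at all. It keeps a \emph{single} endpoint $D=(2n-2i-r,2i-r)$, interprets both sides of the target inequality as counting NE paths $O\to D$ weighted by their number of lattice points on a short diagonal segment ($PQ$ for the left-hand sum, a translated segment $P'Q'$ for the right-hand one), shows that any path meeting $P'Q'$ must also meet $PQ$, and then kills the difference over an intermediate block of each path by a concrete $180^{\circ}$-rotation involution. This yields a manifestly nonnegative expression for the difference, with no appeal to a sign-reversing cancellation of intersecting pairs. Your fallback --- rewriting the coefficient via $\tilde h(x)=(1+x)^{n+2}/\bigl((1+x)^2-ux\bigr)$ and looking for a lattice-path cancellation --- is closer in spirit to what the paper does, but again the cancellation itself, which is the whole theorem, is left unsupplied.
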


The equivalence is immediate, as we are replacing each $\gamma_k$ by $u^k$, so that for each index $j$ we have that $\gamma_{\ell-j}\gamma_{\ell+j} = u^{2\ell}$, and hence the coefficient of $u^{2\ell}$ in $h_i^2-h_{i-1}h_{i+1}$ is precisely the sum appearing on the left in \eqref{eq:two-sums} and, similarly, the coefficient of $u^{2\ell-1}$ is the sum on the right. In other words, in light of all the preceding discussion, if we show Theorem~\ref{thm:reduced-case}, the proof of Theorem~\ref{thm:main} follows.

\subsection*{The proof of Theorem~\ref{thm:reduced-case} via lattice paths}

This subsection is devoted to the proof of Theorem~\ref{thm:reduced-case}. Even though the statement is very simple, the proof we have been able to produce resorts to the enumeration of a complicated family of lattice paths. 

First, we make the inequalities to prove more explicit. We have that 
    \[
        h(x;u) = \sum_{j=0}^{\lfloor n/2\rfloor} u^jx^j(1+x)^{n-2j} = \sum_{i=0}^{\lfloor n/2\rfloor}x^i\sum_{j=0}^i\binom{n-2j}{i-j}u^j.
    \]
    Thus,
    \begin{align*}
        h_i^2-h_{i-1}h_{i+1}= \sum_{j,k=0}^{i} \binom{n-2j}{i-j}\binom{n-2k}{i-k}u^{j+k} - \sum_{j,k=0}^{i+1} \binom{n-2j}{i-1-j}\binom{n-2k}{i+1-k}u^{j+k}.
    \end{align*}

We now show that this quantity is nonnegative.

\begin{theorem}\label{thm: sum nonnegative}
    Let $n\geq 0$ and let $i\leq \lfloor\frac{n}{2}\rfloor$. Then
    \begin{align}\label{eq: lhs rhs}
        \sum_{\substack{j,k=0\\ j+k = r}}^{i}\binom{n-2j}{i-j}\binom{n-2k}{i-k}\geq \sum_{\substack{j,k=0\\ j+k = r}}^{i+1}\binom{n-2j}{i-1-j}\binom{n-2k}{i+1-k},
    \end{align}
    for every $r\geq 0$.
\end{theorem}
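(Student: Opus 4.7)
The plan is to bypass the lattice path machinery entirely by establishing a clean three-term recurrence for the polynomials $h_i = h_i(u)$ and extracting from it a telescoping identity for the Turán expressions $L_i := h_i^2 - h_{i-1} h_{i+1}$.

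For $i \leq \lfloor n/2 \rfloor$ the truncation in $\gamma_h$ is invisible (any ``missing'' term $u^j$ with $j>\lfloor n/2\rfloor$ contributes $\binom{n-2j}{i-j}=0$ to $[x^i]$), so we may work with the full geometric sum
\[
h(x;u) \;=\; \sum_{j\geq 0} u^j x^j (1+x)^{n-2j} \;=\; \frac{(1+x)^{n+2}}{(1+x)^2 - ux}.
\]
Multiplying both sides by $(1+x)^2-ux$ and extracting $[x^i]$ produces the three-term recurrence
\[
h_i + (2-u)\, h_{i-1} + h_{i-2} \;=\; \binom{n+2}{i}, \qquad 0 \leq i \leq \lfloor n/2\rfloor,
\]
with the convention $h_{-1}=h_{-2}=0$.

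Next, expand $L_i - L_{i-1} = h_i(h_i + h_{i-2}) - h_{i-1}(h_{i-1} + h_{i+1})$ and substitute $h_i+h_{i-2} = \binom{n+2}{i} - (2-u)h_{i-1}$ together with $h_{i+1}+h_{i-1} = \binom{n+2}{i+1} - (2-u)h_i$ from the recurrence (the second substitution requires the recurrence at index $i+1$, matching exactly the hypothesis $i \leq \lfloor n/2 \rfloor - 1$ of the theorem). The resulting cross-terms $\pm (2-u) h_{i-1} h_i$ cancel exactly, leaving the striking identity
\[
L_i - L_{i-1} \;=\; \binom{n+2}{i}\, h_i \;-\; \binom{n+2}{i+1}\, h_{i-1}.
\]

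Since $[u^r] h_j = \binom{n-2r}{j-r}$, the nonnegativity of every coefficient of this increment reduces to the single two-binomial inequality
\[
\binom{n+2}{i}\binom{n-2r}{i-r} \;\geq\; \binom{n+2}{i+1}\binom{n-2r}{i-r-1}.
\]
Forming the ratio of the two sides and clearing factorials, the difference collapses to a positive multiple of $(n-2i+1)(r+1)$, which is manifestly nonnegative for $i \leq \lfloor n/2\rfloor$ and $r\geq 0$; the boundary cases in which one of the binomials vanishes are handled directly.

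Combined with the base case $L_0 = h_0^2 = 1$, a one-line induction on $i$ then yields $[u^r] L_i \geq 0$ for every $r \geq 0$ and every $0 \leq i \leq \lfloor n/2\rfloor-1$, which is exactly \Cref{thm: sum nonnegative}.

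The only nontrivial step is recognizing the cancellation in the second paragraph: the $(2-u)$ coefficient in the recurrence is precisely what produces the signed contributions that the authors dissect via \Cref{lem: negatives on the right}, so it is striking that it disappears from the telescoped increment $L_i - L_{i-1}$. Once this cancellation is in hand, the remaining work is a single elementary binomial inequality and a routine induction, with no lattice path enumeration required.
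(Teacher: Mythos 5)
Your proof is correct, and it takes a genuinely different and considerably simpler route than the paper's. The paper establishes \eqref{eq: lhs rhs} by a double-counting argument on north-east lattice paths, interpreting the LHS and RHS as weighted path counts over two segments $PQ$ and $P'Q'$ and using a $180^\circ$-rotation involution together with a decomposition around $\min\{\alpha\cap PQ\}$ and $\max\{\alpha\cap P'Q'\}$ to exhibit the difference as a manifestly nonnegative sum. You instead observe that the bivariate generating function sums to $h(x;u)=(1+x)^{n+2}/((1+x)^2-ux)$, extract the three-term recurrence $h_i+(2-u)h_{i-1}+h_{i-2}=\binom{n+2}{i}$ valid for $0\le i\le\lfloor n/2\rfloor$, and show that the Tur\'an increment telescopes to
\[
L_i-L_{i-1}=\tbinom{n+2}{i}\,h_i-\tbinom{n+2}{i+1}\,h_{i-1},
\]
with the $(2-u)h_ih_{i-1}$ cross-terms cancelling exactly; coefficientwise nonnegativity then reduces to the single inequality $\binom{n+2}{i}\binom{n-2r}{i-r}\ge\binom{n+2}{i+1}\binom{n-2r}{i-1-r}$, whose cleared form is $(n-2i+1)(r+1)\ge0$. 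All the steps check out: the restriction $i\le\lfloor n/2\rfloor-1$ is precisely what guarantees the recurrence at index $i+1$, the extraction $[u^r]h_j=\binom{n-2r}{j-r}$ holds in the relevant range with the usual conventions, the boundary cases $r\ge i$ are handled by the vanishing of $\binom{n-2r}{i-1-r}$, and the base case $L_0=1$ starts the induction. This is worth emphasizing: the authors remark after the theorem that they ``do not know of a simpler or more direct way of establishing this inequality,'' and your telescoping argument is exactly such a way---it replaces the lattice-path enumeration, the claim about intersecting $P'Q'$, and the involution with one algebraic cancellation and one elementary binomial inequality.
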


\begin{proof}
    We employ a path-counting argument coupled with an injection. First recall that for integers $a\leq c$ and $b\leq d$ we have that
    \[
        \#\{\text{paths from $(a,b)$ to $(c,d)$}\} = \binom{c+d-(a+b)}{d-b},
    \]
    where a path is meant to be a \emph{north-east} lattice path, i.e., a lattice path using steps $+(1,0)$ or $+(0,1)$.
    
    Fix now $r\geq 0$ and consider the lattice point $D = (2n-2i-r,2i-r)$.
    Consider first the left hand side (LHS) in \eqref{eq: lhs rhs}. Let $P = (n-2i,0)$ and $Q = (n-i,i)$. For a lattice point $A=(n-i-j,i-j) \in PQ$ we observe that 
    \begin{multline*}
        \#\{\text{paths from $O$ to $A$}\}\cdot \#\{\text{paths from $A$ to $D$}\}=\\
        \binom{n-2j}{i-j}\binom{2n-2r-(n-2j)}{2i-r-(i-j)}=
        \binom{n-2j}{i-j}\binom{n-2k}{i-k},
    \end{multline*}
    where in the last identity we used that $j+k=r$. In other words, the summands in the LHS of \eqref{eq: lhs rhs} are the product of number of paths from $O$ to $A$ and from $A$ to $D$ as $A$ varies in the segment $PQ$. These are the paths contained in the yellow rectangles depicted in \Cref{fig: proof of 4.3}. By a double counting argument we can compute the LHS also by counting the paths from $O$ to $D$, where each path contributes for the number of lattice points in $PQ$ it intersects. More precisely:
    \begin{align*}
        \text{LHS} &= \sum_{A\in PQ\cap\mathbb{Z}^2} \#\{\text{paths from $O$ to $A$}\}\#\{\text{paths from $A$ to $D$}\}\\
        &=\sum_{\alpha \text{ path from $O$ to $D$}}\sum_{A \in \alpha\cap PQ} 1\\
        &=\sum_{\alpha \text{ path from $O$ to $D$}}\#\{\alpha \cap PQ\}
    \end{align*}
    \begin{figure}[h]
        \centering
        \includegraphics[scale=0.9]{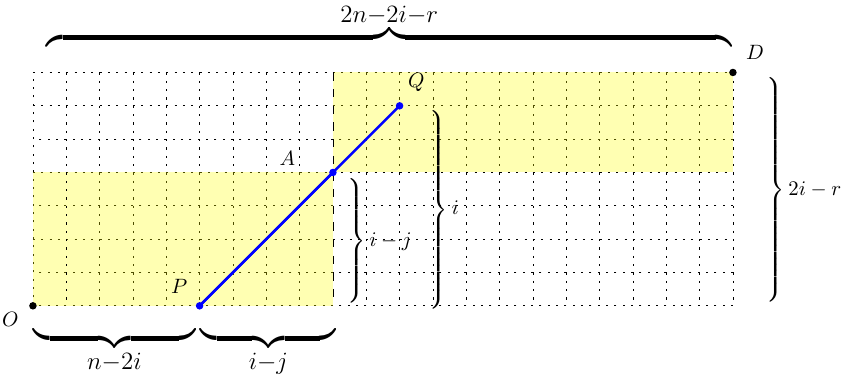}
        \caption{Each summand in the LHS of \eqref{eq: lhs rhs} is the product of the number of paths contained in the yellow rectangles as $A$ varies in $PQ$.}
        \label{fig: proof of 4.3}
    \end{figure}

    Now, let us consider the right hand side (RHS) of \eqref{eq: lhs rhs}. We employ a similar strategy. Let $P'=(n-2i+2,0)$ and $Q' =(n-i+1,i-1)$. For any lattice point $A'=(n-i+1-j,i-1-j) \in P'Q'$ we have that 
    \begin{multline*}
        \#\{\text{paths from $O$ to $A'$}\}\#\{\text{paths from $A'$ to $D$}\}=\\
        \binom{n-2j}{i-1-j}\binom{2n-2r-(n-2j)}{2i-r-(i-1-j)}=
        \binom{n-2j}{i-1-j}\binom{n-2k}{i+1-k}.
    \end{multline*}
    See \Cref{fig: proof 4.3 P'} for an illustration. In particular,
    \begin{align*}
        \text{RHS} &= \sum_{A'\in P'Q'\cap\mathbb{Z}^2} \#\{\text{paths from $O$ to $A'$}\}\#\{\text{paths from $A'$ to $D$}\}\\
        &=\sum_{\alpha \text{ path from $O$ to $D$}}\sum_{A' \in \alpha\cap P'Q'} 1\\
        &=\sum_{\alpha \text{ path from $O$ to $D$}}\#\{\alpha \cap P'Q'\}.
    \end{align*}
    \begin{figure}[h]
        \centering
        \includegraphics[scale=0.9]{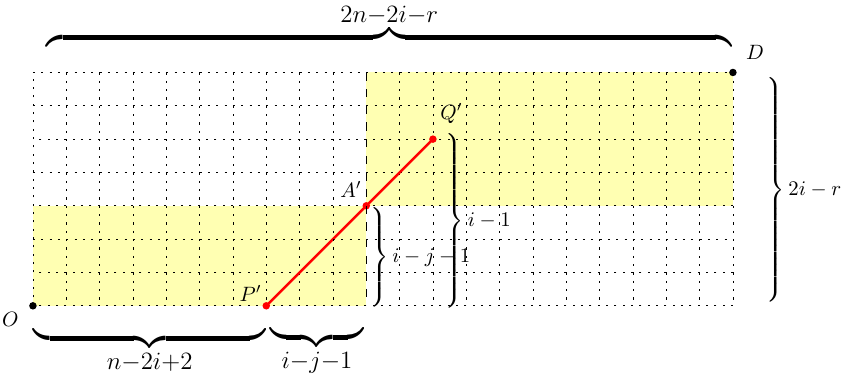}
        \caption{Each summand in the RHS of \eqref{eq: lhs rhs} is the product of the number of paths contained in the yellow rectangles as $A'$ varies in $P'Q'$.}
        \label{fig: proof 4.3 P'}
    \end{figure}
    \begin{claim}\label{claim}
        If $\alpha$ is a path from $O$ to $D$ such that $\alpha\cap P'Q'\neq \varnothing$, then $\alpha\cap PQ\neq \varnothing$.
    \end{claim}
    \begin{proof}[Proof of \Cref{claim}]
        Let $B'=(x',y')\in \alpha\cap P'Q'$. In particular, $x'-y'=n-2i+2$ and $0\leq y'\leq i-1$. Walking backwards (from $B'$ to $O$) in $\alpha$ the difference between the first and second coordinate of a point either increases or decreases by one, and the second coordinate weakly decreases. Since $O$ is in $\alpha$, the difference between the coordinates will eventually be $0$. By continuity, there exists a lattice point $B=(x,y)\in \alpha$ such that $x-y = n-2i$ and $0\leq y\leq i-1$. So $B\in \alpha\cap PQ$.
    \end{proof}
    Observe that the natural lattice partial order in $\mathbb{Z}^2$, which we denote with $\leq$ induces a total order in $PQ\cap \mathbb{Z}^2$ and $P'Q'\cap \mathbb{Z}^2$. If $\alpha$ is a path from $O$ to $D$ such that $\alpha\cap P'Q'\neq \varnothing$ we denote by $\max\{\alpha\cap P'Q'\}$ the maximal element with respect to this order. By \Cref{claim} we have that $\alpha\cap PQ\neq \varnothing$, and we denote by $\min\{\alpha\cap PQ\}$ its minimal element. Since in the proof of \Cref{claim} we showed that given $B'\in \alpha\cap P'Q'$ there exists $B\in \alpha\cap PQ$ with $B\leq B'$, we have that $\min\{\alpha\cap PQ\}\leq \max\{\alpha\cap P'Q'\}$. We will refine the path counting using these elements in the following way: any path $\alpha$ from $O$ to $D$ which intersects $P'Q'$ can be written uniquely as the concatenation of a path $\alpha_1$ from $O$ to $\min\{\alpha\cap PQ\}$, a path $\alpha_2$ from $\min\{\alpha\cap PQ\}$ to $\max\{\alpha\cap P'Q'\}$ and a path $\alpha_3$ from $\max\{\alpha\cap P'Q'\}$ to $D$. In terms of lattice point count we have that 
    \begin{equation}\label{eq: alpha cap PQ}
        \{\alpha \cap PQ\}=\{\alpha_2 \cap PQ\}\cup \{\alpha_3 \cap PQ\}     
    \end{equation}
    and 
    \begin{equation}\label{eq: alpha cap P'Q'}
        \{\alpha \cap P'Q'\}=\{\alpha_2 \cap P'Q'\}.
    \end{equation}
    The next claim shows that the sum of the contributions of the intermediate paths vanishes.
    \begin{claim}\label{claim2}
        Let $R\in PQ\cap\mathbb{Z}^2$ and $R'\in P'Q'\cap\mathbb{Z}^2$, with $R\leq R'$. Then 
        \[
        \sum_{\alpha \text{ path $R\to R'$}}\#\{\alpha \cap PQ\}=\sum_{\alpha \text{ path $R\to R'$}}\#\{\alpha \cap P'Q'\}.
        \]
    \end{claim}
    \begin{proof}[Proof of \Cref{claim2}]
        The rotation by $180^{\circ}$ around the barycenter of the rectangle spanned by $R$ and $R'$ induces an involution $\varphi$ on the set of paths from $R$ to $R'$. Moreover, this map sends lattice points of $PQ$ to lattice points of $P'Q'$. See \Cref{fig: involution} for an illustration of the map $\varphi$, showing how the three lattice points in the intersection of the path on the left with $PQ$ (the blue segment) correspond to three lattice points in the intersection of the path on the right with $P'Q'$ (the red segment). We obtain 
        \begin{align*}
            \sum_{\alpha \text{ path $R\to R'$}}\#\{\alpha \cap PQ\}&=\sum_{\alpha \text{ path $R\to R'$}}\#\{\varphi(\alpha) \cap PQ\}\\
            &=\sum_{\alpha \text{ path $R\to R'$}}\#\{\alpha \cap P'Q'\}.\qedhere
        \end{align*}
    \end{proof}
    \begin{figure}[h]
        \centering
        \includegraphics[scale=0.9]{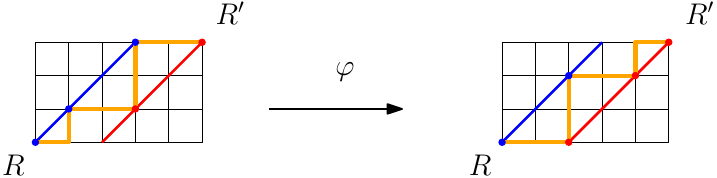}
        \caption{An illustration of the involution in the proof of \Cref{claim2}.}
        \label{fig: involution}
    \end{figure}
    Finally, we can show that the difference $\text{LHS} - \text{RHS}$ is nonnegative:
    \allowdisplaybreaks
    \begin{align*}
        \text{LHS} - \text{RHS} &= \sum_{\alpha \text{ path $O\to D$}}\left(\#\{\alpha \cap PQ\}-\#\{\alpha \cap P'Q'\}\right)\\
        &=\sum_{\substack{\alpha \text{ path $O\to D$}\\
        \alpha\cap P'Q'= \varnothing}}\#\{\alpha \cap PQ\}
       +\sum_{\substack{\alpha \text{ path $O\to D$}\\
        \alpha\cap P'Q'\neq \varnothing}}\left(\#\{\alpha \cap PQ\}-\#\{\alpha \cap P'Q'\}\right)\\
        &=\sum_{\substack{\alpha \text{ path $O\to D$}\\
        \alpha\cap P'Q'= \varnothing}}\#\{\alpha \cap PQ\}\\
        &\qquad +\sum_{\substack{R\in PQ\cap\mathbb{Z}^2\\ R'\in P'Q'\cap\mathbb{Z}^2\\
        R\leq R'}}\left(\sum_{\substack{\alpha \text{ path $O$ to $D$}\\ \min\{\alpha\cap PQ\}=R\\ \max\{\alpha\cap P'Q'\}=R'} }\left(\#\{\alpha \cap PQ\}-\#\{\alpha \cap P'Q'\}\right)\right).
\end{align*}

We now combine \eqref{eq: alpha cap PQ} and \eqref{eq: alpha cap P'Q'}:
  \begin{multline}
        \text{LHS} - \text{RHS}=\sum_{\substack{\alpha \text{ path $O\to D$}\\
        \alpha\cap P'Q'= \varnothing}}\#\{\alpha \cap PQ\} \enspace +\\
        \sum_{\substack{R\in PQ\cap\mathbb{Z}^2\\ R'\in P'Q'\cap\mathbb{Z}^2\\
        R\leq R'}}\left[\sum_{\alpha \text{ path $R$ to $R'$}}\left(\#\{\alpha \cap PQ\}-\#\{\alpha \cap P'Q'\}\right)+ \sum_{\substack{\alpha \text{ path $R'$ to $D$}\\ \alpha\cap P'Q'=\{R'\}}}\#\{\alpha \cap PQ\} \right]\\
        =\sum_{\substack{\alpha \text{ path $O\to D$}\\
        \alpha\cap P'Q'= \varnothing}}\#\{\alpha \cap PQ\}+\sum_{\substack{R\in PQ\cap\mathbb{Z}^2\\ R'\in P'Q'\cap\mathbb{Z}^2\\
        R\leq R'}}\sum_{\substack{\alpha \text{ path $R'$ to $D$}\\ \alpha\cap P'Q'=\{R'\}}}\#\{\alpha \cap PQ\},\label{eq: final sums}  
 \end{multline}
where the last equality follows from \Cref{claim2}. In particular, $\text{LHS} - \text{RHS}\geq 0$.
\end{proof}

\begin{example}
    Let $n=6$, $i=2$ and $r=2$. By \Cref{ex:gamma-degree-3} we know that $c_{1,1}^{(2)}+c_{0,2}^{(2)}=28$ and, by \eqref{eq: final sums} in the proof of \Cref{thm: sum nonnegative}, we know that this number is equal to
    \[
        \sum_{\substack{\alpha \text{ path $O\to D$}\\
        \alpha\cap P'Q'= \varnothing}}\#\{\alpha \cap PQ\}+\sum_{\substack{R\in PQ\cap\mathbb{Z}^2\\ R'\in P'Q'\cap\mathbb{Z}^2\\
        R\leq R'}}\sum_{\substack{\alpha \text{ path $R'$ to $D$}\\ \alpha\cap P'Q'=\{R'\}}}\#\{\alpha \cap PQ\},
    \]
    with $D=(6,2)$, $P=(2,0)$, $Q=(4,2)$, $P'=(4,0)$ and $Q'=(5,1)$. \Cref{fig:paths} shows the 15 paths which give a non-trivial contribution to these sums. Fourteen of them do not intersect $P'Q'$ (the red segment), and each contributes to the first sum with the number of intersections with $PQ$ (the blue segment). The sum of these numbers is 27. One of them, the one in the third row and first column, contributes with a 1 to the second sum: indeed it contains a subpath from $R'=P'$ to $D$ which intersects $PQ$. Hence the contributions sum up to $28$.
\end{example}

\begin{figure}
    \centering
    \begin{tikzpicture}[scale=0.6]
    
    \draw[step=1cm] (0,0) grid (6,2);
    
    \node[below left] at (0,0) {$O$};
    \node[below] at (2,0) {$P$};
    \node[below] at (4,0) {$P'$};
    \node[above] at (4,2) {$Q$};
    \node[above right] at (5,1) {$Q'$};
    \node[above right] at (6,2) {$D = (6,2)$};
    
    \draw[thick,blue] (2,0) -- (4,2);
    
    \draw[thick,red] (4,0) -- (5,1);
    \fill[blue] (2,0) circle (2pt); 
    \fill[blue] (4,2) circle (2pt); 
    \fill[red] (4,0) circle (2pt);  
    \fill[red] (5,1) circle (2pt);  
    
    \end{tikzpicture}
\end{figure}
\begin{figure}
    \begin{tikzpicture}[scale=0.38]
    
    \draw[step=1cm] (0,0) grid (6,2);
        
    \draw[thick,blue] (2,0) -- (4,2);
    
    \draw[thick,red] (4,0) -- (5,1);

    \draw[very thick,line cap=round,Orange] (0,0)--(0,2) -- (6,2);
    \end{tikzpicture}\quad
    \begin{tikzpicture}[scale=0.38]
    
    \draw[step=1cm] (0,0) grid (6,2);
        
    \draw[thick,blue] (2,0) -- (4,2);
    
    \draw[thick,red] (4,0) -- (5,1);

    \draw[very thick,line cap=round,Orange] (0,0)--(0,1) -- (1,1) -- (1,2) -- (6,2);
    
    \end{tikzpicture}\quad
    \begin{tikzpicture}[scale=0.38]
    
    \draw[step=1cm] (0,0) grid (6,2);
        
    \draw[thick,blue] (2,0) -- (4,2);
    
    \draw[thick,red] (4,0) -- (5,1);

    \draw[very thick,line cap=round,Orange] (0,0)--(0,1) -- (2,1) -- (2,2) -- (6,2);
    \end{tikzpicture}\quad
    \begin{tikzpicture}[scale=0.38]
    
    \draw[step=1cm] (0,0) grid (6,2);
        
    \draw[thick,blue] (2,0) -- (4,2);
    
    \draw[thick,red] (4,0) -- (5,1);

    \draw[very thick,line cap=round,Orange] (0,0)--(1,0) -- (1,1) -- (2,1) -- (2,2) -- (6,2);
    \end{tikzpicture}\quad
    \begin{tikzpicture}[scale=0.38]
    
    \draw[step=1cm] (0,0) grid (6,2);
        
    \draw[thick,blue] (2,0) -- (4,2);
    
    \draw[thick,red] (4,0) -- (5,1);

    \draw[very thick,line cap=round,Orange] (0,0)--(1,0) -- (1,2) -- (6,2);
    \end{tikzpicture}\\
    \vspace{0.5cm}
    \begin{tikzpicture}[scale=0.38]
    
    \draw[step=1cm] (0,0) grid (6,2);
        
    \draw[thick,blue] (2,0) -- (4,2);
    
    \draw[thick,red] (4,0) -- (5,1);

    \draw[very thick,line cap=round,Orange] (0,0)--(2,0) -- (2,2) -- (6,2);
    \end{tikzpicture}\quad
    \begin{tikzpicture}[scale=0.38]
    
    \draw[step=1cm] (0,0) grid (6,2);
        
    \draw[thick,blue] (2,0) -- (4,2);
    
    \draw[thick,red] (4,0) -- (5,1);

    \draw[very thick,line cap=round,Orange] (0,0)--(0,1) -- (3,1) -- (3,2) -- (6,2);
    \end{tikzpicture}\quad
    \begin{tikzpicture}[scale=0.38]
    
    \draw[step=1cm] (0,0) grid (6,2);
        
    \draw[thick,blue] (2,0) -- (4,2);
    
    \draw[thick,red] (4,0) -- (5,1);

    \draw[very thick,line cap=round,Orange] (0,0)--(0,1) -- (4,1) -- (4,2) -- (6,2);
    \end{tikzpicture}\quad
    \begin{tikzpicture}[scale=0.38]
    
    \draw[step=1cm] (0,0) grid (6,2);
        
    \draw[thick,blue] (2,0) -- (4,2);
    
    \draw[thick,red] (4,0) -- (5,1);

    \draw[very thick,line cap=round,Orange] (0,0)--(1,0) -- (1,1) -- (3,1) -- (3,2) -- (6,2);
    \end{tikzpicture}\quad
    \begin{tikzpicture}[scale=0.38]
    
    \draw[step=1cm] (0,0) grid (6,2);
        
    \draw[thick,blue] (2,0) -- (4,2);
    
    \draw[thick,red] (4,0) -- (5,1);

    \draw[very thick,line cap=round,Orange] (0,0)--(0,1) -- (1,1) -- (4,1) -- (4,2) -- (6,2);
    \end{tikzpicture}\\
    \vspace{0.5cm}
    \begin{tikzpicture}[scale=0.38]
    
    \draw[step=1cm] (0,0) grid (6,2);
        
    \draw[thick,blue] (2,0) -- (4,2);
    
    \draw[thick,red] (4,0) -- (5,1);

    \draw[very thick,line cap=round,Orange] (0,0)--(4,0) -- (4,2) -- (6,2);
    \end{tikzpicture}\quad
    \begin{tikzpicture}[scale=0.38]
    
    \draw[step=1cm] (0,0) grid (6,2);
        
    \draw[thick,blue] (2,0) -- (4,2);
    
    \draw[thick,red] (4,0) -- (5,1);

    \draw[very thick,line cap=round,Orange] (0,0)--(2,0) -- (2,1) -- (3,1) -- (3,2) -- (6,2);
    \end{tikzpicture}\quad
    \begin{tikzpicture}[scale=0.38]
    
    \draw[step=1cm] (0,0) grid (6,2);
        
    \draw[thick,blue] (2,0) -- (4,2);
    
    \draw[thick,red] (4,0) -- (5,1);

    \draw[very thick,line cap=round,Orange] (0,0)--(3,0) -- (3,2) -- (6,2);
    \end{tikzpicture}\quad
    \begin{tikzpicture}[scale=0.38]
    
    \draw[step=1cm] (0,0) grid (6,2);
        
    \draw[thick,blue] (2,0) -- (4,2);
    
    \draw[thick,red] (4,0) -- (5,1);

    \draw[very thick,line cap=round,Orange] (0,0)--(2,0) -- (2,1) -- (4,1) -- (4,2) -- (6,2);
    \end{tikzpicture}\quad
    \begin{tikzpicture}[scale=0.38]
    
    \draw[step=1cm] (0,0) grid (6,2);
        
    \draw[thick,blue] (2,0) -- (4,2);
    
    \draw[thick,red] (4,0) -- (5,1);

    \draw[very thick,line cap=round,Orange] (0,0)--(3,0) -- (3,1) -- (4,1) -- (4,2) -- (6,2);
    \end{tikzpicture}\caption{Lattice paths which contribute to $[u^2](h_2^2-h_1h_3)$ with $n=6$.}\label{fig:paths}
\end{figure}

\begin{remark}
    When $r\geq i+1$ the second coordinate of $Q'$ is larger than or equal to the second coordinate of $D$. In other words $Q'$ is ``above" $D$. In this case, each path from $O$ to $D$ intersects $P'Q'$. For the same reason, we have that $\max\{\alpha\cap P'Q'\}$ is larger than any point in $\alpha\cap PQ$, and hence there is no non-zero contribution to the second sum of \eqref{eq: final sums}. Following the proof of \Cref{thm: sum nonnegative} this implies that $\sum_{\substack{j,k=0\\i+j=r}}^i{c_{jk}^{(i)}}=0$. We observe that there is another way to see this identity when $r\geq i+1$ using generating functions. Indeed, by \cite[Eq.~(2.5.15)]{generatingfunctionology} we have that
    \[
    \binom{n-2j}{i-j}=\binom{2(i-j)+(n-2i)}{i-j} = [x^{n-i-j}]\frac{(1-\sqrt{1-4x})^{n-2i}}{2\sqrt{1-4x}},
    \]
    where the square brackets indicate the coefficient in the power series expansion.
    When $r\geq i+1$, both the LHS and the RHS of \eqref{eq: lhs rhs} are equal to 
    \[[x^{2n-2i-r}] \left(\frac{(1-\sqrt{1-4x})^{n-2i}}{2\sqrt{1-4x}}\right)^2.\]
    
    Moreover, the computation in \eqref{eq: final sums} implies that it is possible to express the difference between both sides in \eqref{eq: lhs rhs} as a manifestly positive expression involving binomial coefficients. An alternative approach that performs this computation via the Wilf--Zeilberger method was privately communicated to us by Tewodros Amdeberhan.
\end{remark}

\section*{Acknowledgments}

The authors thank two anonymous reviewers for their careful reading and thoughtful suggestions. Luis Ferroni is supported by the Minerva Research Foundation at the Institute for Advanced Study. Greta Panova is partially supported by an NSF CCF grant and a Simons Fellowship.  Lorenzo Venturello is partially supported by the PRIN project ``Algebraic and geometric aspects of Lie theory" (2022S8SSW2) and by INdAM -- GNSAGA. 

\bibliographystyle{amsalpha}
\bibliography{bibliography}

\end{document}